\def\timesover#1#2#3{\ \xymatrix@1@=0pt@M=0pt{ _{#1}&\times&_{#2} 
\\& ^{#3}&}\ }
\def\otimesover#1#2#3{\ \xymatrix@1@=0pt@M=0pt{ 
_{#1}&\otimes&_{#2} \\& ^{#3}&}\ }
\newtheorem{thm}{Theorem}
\newtheorem{lem}[thm]{Lemma}
\newtheorem{prop}[thm]{Proposition}
\newtheorem{rmk}[thm]{Remark}
\numberwithin{equation}{section}
\newcommand{\Pic}{{\rm Pic}}
\newcommand{\G}{{\mathbb G}}
\newcommand{\Z}{{\mathbb Z}}
\renewcommand{\P}{{\mathbb P}}
\newcommand{\C}{{\mathbb C}}
\begin{document}

\title[Brauer group of the Moduli spaces of ${\rm PGL}_r({\mathbb 
C})$--bundles]{Unramified Brauer group of the moduli spaces of ${\rm PGL}_r({\mathbb C})$--bundles over 
curves}

\author[I. Biswas]{Indranil Biswas}

\address{School of Mathematics, Tata Institute of Fundamental
Research, Homi Bhabha Road, Bombay 400005, India}

\email{indranil@math.tifr.res.in}

\author[A. Hogadi]{Amit Hogadi}

\address{School of Mathematics, Tata Institute of Fundamental
Research, Homi Bhabha Road, Bombay 400005, India}

\email{amit@math.tifr.res.in}

\author[Y. I. Holla]{Yogish I. Holla}

\address{School of Mathematics, Tata Institute of Fundamental
Research, Homi Bhabha Road, Bombay 400005, India}

\email{yogi@math.tifr.res.in}

\subjclass[2000]{14H60, 14E08, 14F22}

\keywords{Semistable projective bundle, moduli space, 
rationality, Brauer group, Weil pairing}

\date{}

\begin{abstract}
Let $X$ be an irreducible smooth complex projective curve of 
genus $g$, with $g\,\geq\, 2$. Let $N$ be a connected component of the moduli space of 
semistable principal ${\rm PGL}_r(\mathbb C)$--bundles over $X$; it is a normal
unirational complex projective variety.  We prove that the Brauer group of a desingularization of $N$ is 
trivial. 
\end{abstract}

\maketitle

\section{Introduction}

Let $X$ be an irreducible smooth complex projective curve, with
$\text{genus}(X)\, =\, g\, \geq\, 2$. For a fixed line bundle
$\mathcal L$ over $X$, let $M_X(r, {\mathcal L})$ 
be the coarse moduli space of semistable vector bundles over $X$ of rank 
$r$ and determinant $\mathcal L$. It is a normal 
unirational complex projective variety, and if $\text{degree}({\mathcal 
L})$ is coprime to $r$, then $M_X(r, {\mathcal L})$ is known to
be rational \cite{Ne}, \cite{KS}. Apart from these coprime case, and
the single case of $g\,=\, r\, =\,\text{degree}(\mathcal L)\,=\,2$
when $M_X(r, {\mathcal L})\,=\, \P^3_{\C}$,
the rationality of $M_X(r, {\mathcal L})$ 
is an open question in every other case. See \cite{Ho} for 
rationality of some other types of moduli spaces associated to $X$.

We consider the coarse moduli space $N_X(r,d)$ of  
semistable principal ${\rm PGL}_r(\mathbb C)$--bundles of topological type $d$ over $X$. Recall that a ${\rm PGL}_r(\mathbb C)$ bundle $P/X$ is said to be of topological type $d$ if the associated $\P^{r-1}$-bundle is isomorphic to ${\mathbb P}{\rm roj}({\mathbb E})$ for some rank $r$ vector bundle $E$ whose degree is congruent to $d$ modulo $r$.
This $N_X(r,d)$ is an irreducible normal unirational 
complex projective variety. This paper is a sequel to \cite{BHH}, where we investigate the Brauer group of
desginularization of moduli spaces attached to curves. This Brauer group is a birational invariant of the space and its vanishing is a necessary condition for the space involved to be rational. 

In this paper we prove that the Brauer group of a desingularization 
of $N_X(r,d)$ is zero (see Theorem \ref{theorem}). 

When $g\,=\, 2$, the moduli space $N_X(2,0)$ is a 
quotient of $\P^3_{\C}$ by a faithful action of the
abelian group $(\Z/2\Z)^4$. In this special case it follows the quotient is rational.

\section{Preliminaries}\label{pril}

We continue with the above set--up and notation.
Let $N_X(r,d)$ denote the coarse moduli space of S--equivalence
classes of all semistable principal 
$\text{PGL}_r(\mathbb C)$--bundles of topological type $d$ over $X$. For notational 
convenience, $N_X(r,d)$ will also be denoted by $N$.

Let ${M}_X(r,{\mathcal L}_X)$ be the coarse moduli space of 
S-equivalence classes of semistable vector bundles over $X$ of 
rank $r$ and determinant ${\mathcal L}_X$. 
Let $\Gamma$ be the group of all isomorphism classes
of algebraic line bundles $\tau$ over $X$ such that
$\tau^{\otimes r}\, =\, {\mathcal O}_X$. This group $\Gamma$ 
has the following natural action on ${M}_X(r,{\mathcal L}_X)$: 
the action of any $\tau\, \in\, \Gamma$ sends any $E\, \in\, 
{M}_X(r,{\mathcal L}_X)$ to $E\, \otimes \, \tau$. The moduli 
space $N$ is identified with the quotient variety
${M}_X(r,{\mathcal L}_X)/\Gamma$. Let
\begin{equation}\label{4f}
f\,:\,{M}_X(r,{\mathcal L}_X)\, \longrightarrow\, 
{M}_X(r,{\mathcal L}_X)/\Gamma\,=\, N
\end{equation}
be the quotient morphism.

For notational convenience, the moduli space $M_X(r,{\mathcal 
O}_X)$ will also be denoted by $M_{{\mathcal L}_X}$.

Let
\begin{equation}\label{ress}
M_{{\mathcal L}_X}^{\rm st}\,\subset\, M_{{\mathcal L}_X}
~\, ~ \text{ and }~\, ~ N^{\rm st}\,\subset\, N
\end{equation}
be the loci of stable bundles. The above action of
$\Gamma$ on $M_{{\mathcal O}_X}$ 
preserves $M_{{\mathcal O}_X}^{\rm st}$, and
$$
f(M_{{\mathcal O}_X}^{\rm st})\,=\, N^{\rm st}\, .
$$ 

\section{The action of $\Gamma$}

Consider the action of $\Gamma$ on $M_{{\mathcal O}_X}$ defined 
in Section \ref{pril}. 
For any primitive $\tau\, \in\, \Gamma$, i.e. an element of order $r$, let
\begin{equation}\label{is}
M^\tau_{{\mathcal O}_X}\, =\, \{E\, \in\, M_{{\mathcal O}_X}\, 
\mid\, E\otimes \tau\, =\, E\}\, \subset\, M_{{\mathcal O}_X}
\end{equation}
be the fixed point locus.

Take any nontrivial line bundle $\tau\, \in\, \Gamma$ of order $r$.
Let
\begin{equation}\label{res-phi}
\phi\, :\, Y \, \longrightarrow\, X
\end{equation}
be the \'etale cyclic covering of degree $r$ given by $\tau$. We 
recall the construction of $Y$ as the spectral cover associated to the equation $\tau ^r \cong {\mathcal O}_X$.

Let
$$
\beta\, :\, Y\, \longrightarrow\, Y
$$
be a nontrivial generator  of the Galois group 
$\text{Gal}(\phi)\,=\, {\mathbb Z}/r {\mathbb Z}$ of the 
covering $\phi$. The homomorphism
$\xi\, \longmapsto\, \beta^*\xi$ defines an action of 
$\text{Gal}(\phi)$ on $\text{Pic}^d(Y)$ for any $d$.

Let 
\begin{equation}\label{pull-back}
\phi^*:\, \Pic^0(X) \, \longrightarrow \, \Pic^0(Y)
\end{equation}
be the pullback homomorphism $L\, \longmapsto\, \phi^*L$. Let 
$K$ denote the kernel of $\phi^*$; it
is a group of order $r$ generated by $\tau$. Let 
\begin{equation}\label{norm0}
{\rm Nm}: \, \Pic^d (Y) \, \longrightarrow\, \Pic^d(X) 
\end{equation}
 and 
 \begin{equation}\label{norm}
{\rm N}: \, \Pic^d (Y) \, \longrightarrow\, \Pic^d(X) 
\end{equation} 
 be the norm homomorphism and the twisted norm morphism. We recall that ${\rm Nm}$ takes a line bundle $\xi$ to the descent of $\otimes (\beta^{*i}{\xi})$ and 
  ${\rm N}$ sends a line bundle $\xi$ to ${\rm Nm}(\xi)\otimes \tau ^{(r(r-1)/2}$. 

The group $\Gamma$ has a natural action on $\Pic^d (Y)$; any 
$\sigma\, \in\, \Gamma$ acts as the automorphism $\xi\, 
\longmapsto\, \xi\otimes \phi^*\sigma$. Therefore, $\phi^*$
in \eqref{pull-back} is $\Gamma$--equivariant, and the
kernel $K$ acts trivially on ${\rm Pic}^d(Y)$. The morphism
${\rm N}$ in \eqref{norm} factors through the quotient morphism
${\rm Pic}^d(Y)\,\longrightarrow\,\Pic^d(Y)/\Gamma$. The action of
$\Gamma$ on $\Pic^d (Y)$ clearly commutes with the action of 
$\text{Gal}(\phi)$ defined earlier. 

Let
\begin{equation}\label{recu}
{\mathcal U}_{{\mathcal L}_X}\, :=\,{\rm N}^{-1}({\mathcal L}_X)\setminus
({\rm N}^{-1}({\mathcal L}_X))^{{\rm Gal}(\phi)}
\, \subset \, {\rm Nm}^{-1}({\mathcal L}_X)
\end{equation}
be the complement of the fixed point locus for the
action of $\text{Gal}(\phi)$. It is a
$\Gamma$--invariant open subscheme. 

Now we state a well-known result (cf. \cite[Lemma 3.4]{NR}).

\begin{lem}\label{BNR}
Take any primitive line bundle $\tau\, \in\, \Gamma$.
The reduced closed subscheme $$(M^{\rm st}_{{\mathcal 
L}_X})^\tau\,:=\,M^{\rm
st}_{{\mathcal L}_X}\cap M^{\tau}_{{\mathcal L}_X}\, 
\subset\, M^{\rm st}_{{\mathcal L}_X}$$
(see \eqref{is} and \eqref{ress}) is 
$\Gamma$--equivariantly isomorphic to the quotient scheme
$$
{\mathcal U}_{{\mathcal L}_X}/{\rm Gal}(\phi)\, .
$$
\end{lem}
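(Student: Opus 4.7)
The plan is to construct the $\Gamma$-equivariant isomorphism explicitly via the pushforward $\phi_*$, following the spectral-cover approach of Narasimhan--Ramanan. Since $\phi^*\tau \cong {\mathcal O}_Y$, the projection formula gives $\phi_*\xi \otimes \tau \cong \phi_*\xi$ for every line bundle $\xi$ on $Y$, so any such $\phi_*\xi$ lies in $M^\tau_{{\mathcal L}_X}$. Conversely, if $E$ is stable on $X$ with an isomorphism $E \otimes \tau \cong E$, this endows $E$ with the structure of a module over the ${\mathcal O}_X$-algebra $\phi_*{\mathcal O}_Y \cong \bigoplus_{i=0}^{r-1}\tau^{i}$, so $E \cong \phi_*\xi$ for some sheaf $\xi$ on $Y$, necessarily a line bundle since $\phi$ is \'etale of degree $r$ and $E$ has rank $r$. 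Two line bundles on $Y$ have isomorphic pushforwards iff they lie in the same ${\rm Gal}(\phi)$-orbit, so on the Galois-free locus the fibres of $\phi_*$ are exactly the ${\rm Gal}(\phi)$-orbits.

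Next, I would verify that the relevant subschemes match on either side. The identity $\det(\phi_*\xi) \cong {\rm Nm}(\xi) \otimes \det(\phi_*{\mathcal O}_Y)$, combined with $\det(\phi_*{\mathcal O}_Y) \cong \tau^{r(r-1)/2}$ coming from the isotypic decomposition of $\phi_*{\mathcal O}_Y$, gives $\det(\phi_*\xi) \cong {\rm N}(\xi)$; hence imposing $\det\phi_*\xi \cong {\mathcal L}_X$ is equivalent to $\xi \in {\rm N}^{-1}({\mathcal L}_X)$. For the stability equivalence: if $\xi$ is fixed by $\beta^i$ for some $i \neq 0$, then $\xi$ descends through the corresponding intermediate \'etale quotient, so $\phi_*\xi$ decomposes as a direct sum of bundles of smaller rank and cannot be stable; conversely, if $\xi$ is Galois-free then $\phi_*\xi$ is stable by the classical argument of Narasimhan--Ramanan. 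The $\Gamma$-equivariance is one more projection-formula computation: $\phi_*(\xi \otimes \phi^*\sigma) \cong \phi_*\xi \otimes \sigma$.

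To promote this pointwise bijection to an isomorphism of schemes, I would work in families using a Poincar\'e line bundle on ${\mathcal U}_{{\mathcal L}_X} \times Y$, push forward to produce a family of stable bundles on $X$ parametrized by ${\mathcal U}_{{\mathcal L}_X}$, and invoke the moduli property of $M^{\rm st}_{{\mathcal L}_X}$; the descent through ${\rm Gal}(\phi)$ is automatic from the first step. The main obstacle I anticipate is the careful bookkeeping of determinant and norm conventions: once the formula $\det(\phi_*{\mathcal O}_Y) \cong \tau^{r(r-1)/2}$ is aligned with the definition of ${\rm N}$ in \eqref{norm}, the remainder reduces to standard facts about cyclic spectral covers.
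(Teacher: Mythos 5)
Your proposal is correct and follows essentially the same route as the paper, which offers no proof of its own but simply cites the spectral--cover argument of Narasimhan--Ramanan (\cite[Lemma 3.4]{NR}): pushforward $\phi_*$ from the cyclic cover, the projection formula for $\tau$--invariance and $\Gamma$--equivariance, the determinant identity $\det(\phi_*\xi)\cong {\rm Nm}(\xi)\otimes\tau^{r(r-1)/2}={\rm N}(\xi)$, stability on the Galois--free locus, and a Poincar\'e--bundle family argument. The only points left implicit --- normalizing the isomorphism $E\otimes\tau\cong E$ (using simplicity of $E$) so that its $r$--th iterate is the identity before extracting the $\phi_*{\mathcal O}_Y$--module structure, and upgrading the bijective morphism to an isomorphism via smoothness of the fixed locus in $M^{\rm st}_{{\mathcal L}_X}$ --- are standard and easily supplied.
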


\begin{lem}\label{HP}
The norm map as defined in \eqref{norm} is surjective, and
there is a bijection of the set of connected components
$\pi_0({\rm N}^{-1}({\mathcal L}_X))$ with the Cartier dual 
$K^{\vee}\, :=\, {\rm Hom}(K,\, {\mathbb C}^*)\,=\,
{\mathbb Z}/r\mathbb Z$, where $K\,:=\, {\rm kernel}(\phi^*)\,
=\, {\mathbb Z}/r{\mathbb Z}$.
\end{lem}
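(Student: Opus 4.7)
The plan has two independent parts: showing that ${\rm N}$ is surjective, and computing the component group of its fibers. Since ${\rm N}$ differs from ${\rm Nm}$ only by translation by the degree-zero line bundle $\tau^{r(r-1)/2}$, each of these reduces to the analogous statement for ${\rm Nm}$. Surjectivity of ${\rm Nm}\colon\Pic^d(Y)\to\Pic^d(X)$ follows from two standard facts: the identity ${\rm Nm}\circ\phi^*=[r]_{\Pic^0(X)}$, which forces surjectivity on the $\Pic^0$ component (since $[r]$ is surjective on the divisible group $\Pic^0(X)$), together with the fact that ${\rm Nm}$ preserves degrees. Exhibiting any element of $\Pic^d(X)$ in the image---for example, ${\rm Nm}(\mathcal{O}_Y(dP))=\mathcal{O}_X(d\phi(P))$ for a closed point $P\in Y$---and translating by $\Pic^0(Y)$ then covers all of $\Pic^d(X)$.

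For the component count, each nonempty fiber of ${\rm N}$ is a torsor over $\ker({\rm Nm}\colon\Pic^0(Y)\to\Pic^0(X))$, so $\pi_0({\rm N}^{-1}(\mathcal{L}_X))$ is in bijection with $\pi_0(\ker({\rm Nm}))$. I would identify this component group via Cartier duality. Recall that under the autoduality of Jacobians, ${\rm Nm}$ is the dual homomorphism of $\phi^*$. Factor $\phi^*$ as the composition
$$\Pic^0(X)\,\xrightarrow{\pi}\,\Pic^0(X)/K\,\xrightarrow{\iota}\,\Pic^0(Y),$$
where $\pi$ is an isogeny with kernel $K$ and $\iota$ is a closed immersion of abelian varieties. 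Taking duals (identifying each Jacobian with its own dual via the principal polarization) gives
$$\Pic^0(Y)\,\xrightarrow{\iota^\vee}\,(\Pic^0(X)/K)^\vee\,\xrightarrow{\pi^\vee}\,\Pic^0(X).$$
Here $\iota^\vee$ is surjective with connected kernel (the Prym variety $P$, equal to the orthogonal complement of $\Pic^0(X)/K\hookrightarrow\Pic^0(Y)$), while $\pi^\vee$ is an isogeny whose kernel is Cartier dual to $\ker(\pi)=K$, hence canonically isomorphic to $K^\vee$. Composing yields a short exact sequence
$$0\,\to\,P\,\to\,\ker({\rm Nm})\,\to\,K^\vee\,\to\,0,$$
and therefore $\pi_0(\ker({\rm Nm}))\cong K^\vee$, as required.

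The only step that goes beyond pure abelian-variety formalism is the identification of ${\rm Nm}$ as the dual of $\phi^*$ under the principal polarizations. This is classical---either a consequence of Grothendieck--Serre duality applied to the finite morphism $\phi$, or a direct computation on tangent spaces---and it is the single place where the geometry of the covering enters. Everything else is Cartier duality for isogenies of abelian varieties together with bookkeeping of degrees, which the twist $\tau^{r(r-1)/2}\in\Pic^0(X)$ in the definition of ${\rm N}$ leaves unchanged.
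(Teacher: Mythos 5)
Your argument is correct, and it is genuinely different from what the paper does: the paper offers no proof of this lemma at all, but simply refers to Narasimhan--Ramanan (Proposition 3.5 of [NR2]), where the statement is established in the course of their study of generalized Prym varieties as fixed points. Your proof is self-contained modulo two classical inputs, both of which you correctly isolate: (i) the identity ${\rm Nm}\circ\phi^*=[r]$ on $\Pic^0(X)$, which together with divisibility of $\Pic^0(X)$ and degree bookkeeping gives surjectivity (and since the twist $\tau^{r(r-1)/2}$ has degree zero, ${\rm N}$ and ${\rm Nm}$ have the same image and the same fibers up to a translation of the target); and (ii) the fact that ${\rm Nm}$ is the dual of $\phi^*$ under the principal polarizations of the two Jacobians. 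Granting (ii), your dualization of the factorization $\phi^*=\iota\circ\pi$ is complete: exactness of dualization for abelian varieties gives both $\ker(\pi^\vee)\cong K^\vee$ (Cartier duality for isogenies) and the connectedness of $\ker(\iota^\vee)\cong\left(\Pic^0(Y)/{\rm image}(\iota)\right)^\vee$, so the extension $0\to P\to\ker({\rm Nm})\to K^\vee\to 0$, in which $P$ is a closed connected subgroup of finite index, forces $\pi_0(\ker({\rm Nm}))\cong K^\vee$; the torsor structure of each fiber of ${\rm N}$ under $\ker({\rm Nm})\subset\Pic^0(Y)$ then transports this to $\pi_0({\rm N}^{-1}({\mathcal L}_X))$. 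What your route buys is a canonical \emph{group} isomorphism $\pi_0(\ker({\rm Nm}))\cong K^\vee$ obtained from pure abelian-variety formalism, with the geometry of the covering entering only through the duality $(\phi^*)^\vee={\rm Nm}$; what the paper's citation buys is brevity, and a reference in which the same component count is tied directly to the spectral-cover description of the fixed-point locus that the paper uses in Lemma 1 and Proposition 6.
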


Lemma \ref{HP} is proved in \cite{NR} (see \cite[Proposition 
3.5]{NR}). 

Let $V_0$ be the connected components of ${\rm N}^{-1}(
{\mathcal O}_X)$, with ${\mathcal O}_Y\,\in\, V_0$. Since
${\rm Nm}^{-1}({\mathcal O}_X)$ is smooth, both $V_0$.
is irreducible.

\begin{lem}\label{gal}
The action of ${\rm Gal}(\phi)$ on ${\rm 
N}^{-1}({\mathcal O}_X)$ preserves the connected component $V_0$. 
For the action of ${\rm Gal}(\phi)$ on ${\rm 
N}^{-1}({\mathcal L}_X)$ the quotient ${\rm 
N}^{-1}({\mathcal L}_X)/{\rm Gal}(\phi)$ has exactly $(r,d)$ components which are smooth, here $(r,d)$ is the greatest common divisor of $r$ and $d$.
\end{lem}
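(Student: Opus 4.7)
My strategy is to analyze the action of $\text{Gal}(\phi) = \langle \beta \rangle$ on the set $\pi_0$ of connected components of the relevant norm-fiber and reduce the problem to an orbit count on $K^{\vee} \cong \mathbb{Z}/r\mathbb{Z}$.

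First, fix a base point $\xi_0 \in \text{N}^{-1}(\mathcal{L}_X)$ and use translation by $\xi_0$ to identify $\text{N}^{-1}(\mathcal{L}_X)$ with $\ker(\text{Nm})$ as a torsor. The induced bijection $\pi_0(\text{N}^{-1}(\mathcal{L}_X)) \cong \pi_0(\ker(\text{Nm})) \cong K^{\vee}$ (from Lemma \ref{HP}) turns the $\beta^*$-action into an affine map on $\mathbb{Z}/r\mathbb{Z}$: the composition of translation by $c := [\beta^*\xi_0 \otimes \xi_0^{-1}] \in K^{\vee}$ with the induced action of $\beta^*$ on the group $\pi_0(\ker(\text{Nm}))$. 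By naturality of the isomorphism in Lemma \ref{HP} (the group $K = \ker(\phi^*) \subset \text{Pic}^0(X)$ is attached to $X$, on which $\text{Gal}(\phi)$ carries no action), the linear part is trivial, and the action on $\pi_0$ reduces to a single translation by $c$.

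To compute $c$, choose $\xi_0 = \mathcal{O}_Y(D')$ with $D'$ a divisor on $Y$ of degree $d$ whose push-forward $\phi_*D'$ represents $\mathcal{L}_X$. Then $\beta^*\xi_0 \otimes \xi_0^{-1} = \mathcal{O}_Y(\beta^{-1}D' - D')$, and its class in $K^{\vee}$ equals $d$ times the class of $[\beta^{-1}q - q]$ for any single $q \in Y$. The latter class is independent of $q$ (by connectedness of $Y$ and the locally-constant nature of the class) and is a generator of $K^{\vee}$ (a standard fact about cyclic étale covers). Hence $c \equiv d \pmod{r}$, so $\text{Gal}(\phi)$ acts on $\pi_0 \cong \mathbb{Z}/r\mathbb{Z}$ by translation by $d$, yielding exactly $(r,d)$ orbits and hence $(r,d)$ components of the quotient. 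The first assertion is the specialization $d = 0$: translation is trivial, so every component of $\text{N}^{-1}(\mathcal{O}_X)$---in particular $V_0$---is preserved.

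The main obstacle I anticipate is the smoothness of the $(r,d)$ quotient components. The stabilizer $H \subset \text{Gal}(\phi)$ of each component has order $(r,d)$ and in general acts on the smooth Prym torsor with a nontrivial fixed locus (line bundles pulled back from the intermediate cover $Y/H \to X$ of degree $r/(r,d)$), so smoothness does not follow from a direct Chevalley--Shephard--Todd argument. My plan is to invoke Lemma \ref{BNR}: it realizes $\mathcal{U}_{\mathcal{L}_X}/\text{Gal}(\phi)$ as (a component of) the smooth moduli space $(M^{\rm st}_{\mathcal{L}_X})^\tau$, which yields smoothness on the open Gal-free locus; completing the argument across the fixed locus will require a local analysis at points with nontrivial stabilizer.
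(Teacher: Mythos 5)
Your count of the connected components is correct, and it is a genuinely different route from the paper's. For the first assertion the paper merely observes that ${\mathcal O}_Y$ is a ${\rm Gal}(\phi)$--fixed point lying in $V_0$, and for the second it gives no argument at all, citing Proposition 3.5 of Narasimhan--Ramanan. Your reduction of the ${\rm Gal}(\phi)$--action on $\pi_0({\rm N}^{-1}({\mathcal L}_X))\cong \Z/r\Z$ to translation by $d$ is sound; the triviality of the linear part can even be seen without appealing to naturality of Lemma \ref{HP}, since for $\eta\in\ker({\rm Nm})$ the element $\beta^*\eta\otimes\eta^{-1}$ lies in $(1-\beta^*)\Pic^0(Y)$, a connected subgroup through the origin, hence in the identity component of $\ker({\rm Nm})$. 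The one input you invoke as ``a standard fact'' --- that $[{\mathcal O}_Y(\beta^{-1}q-q)]$ generates $\pi_0(\ker{\rm Nm})$ --- is true, but it is Prym theory of essentially the same depth as the result the paper cites from [NR]; your argument repackages that citation rather than replacing it, which is acceptable.

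The genuine gap is the smoothness claim, and it is worse than a deferred computation: the local analysis you postpone cannot be completed, because the full quotient ${\rm N}^{-1}({\mathcal L}_X)/{\rm Gal}(\phi)$ is in general singular along the image of the fixed locus. Take $r=2$, ${\mathcal L}_X={\mathcal O}_X$ (so $(r,d)=2$) and $g\geq 3$. By the definition of the twisted norm, ${\rm N}^{-1}({\mathcal O}_X)={\rm Nm}^{-1}(\tau)$, and for any $\xi$ in this fiber one has $\xi\otimes\beta^*\xi=\phi^*{\rm Nm}(\xi)=\phi^*\tau={\mathcal O}_Y$, so the Galois involution acts by $\xi\mapsto\xi^{-1}$. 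Choose $L\in\Pic^0(X)$ with $L^{\otimes 2}=\tau$; then $\xi_0=\phi^*L$ satisfies ${\rm Nm}(\xi_0)=L^{\otimes 2}=\tau$ and $\xi_0^{\otimes 2}=\phi^*\tau={\mathcal O}_Y$, so $\xi_0$ is a fixed point of $\beta^*$ in this fiber. Writing nearby points as $\xi_0\otimes\eta$ with $\eta$ in the identity component of $\ker({\rm Nm})$, the involution becomes $\eta\mapsto\eta^{-1}$, i.e.\ it acts as $-{\rm id}$ on the $(g-1)$--dimensional tangent space, and the quotient acquires a Kummer singularity $\C^{g-1}/\{\pm 1\}$ at the image of $\xi_0$ --- not smooth once $g\geq 3$. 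So smoothness can only be asserted where it can hold: either when $(r,d)=1$, in which case every nontrivial element of ${\rm Gal}(\phi)$ translates $\pi_0$ nontrivially and the action on ${\rm N}^{-1}({\mathcal L}_X)$ is free, or after deleting the fixed locus, i.e.\ for ${\mathcal U}_{{\mathcal L}_X}/{\rm Gal}(\phi)$, where your appeal to Lemma \ref{BNR} does work (the fixed locus $(M^{\rm st}_{{\mathcal L}_X})^\tau$ of a finite-order automorphism of a smooth variety is smooth in characteristic zero). Your instinct that the fixed locus is the obstruction was exactly right; the correct resolution is to weaken the statement there, not to hope a cleverer local computation will rescue it.
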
 

\begin{proof}
The point ${\mathcal O}_Y\, \in\, \text{Pic}^0(Y)$ is 
fixed by ${\rm Gal}(\phi)$; hence $V_0$ is fixed by
${\rm Gal}(\phi)$. Therefore, the other component, namely $V_1$, is
also fixed by ${\rm Gal}(\phi)$. See \cite{NR}, Proposition 3.5 for the proof of the second statement.
\end{proof}

\begin{lem}\label{nss} Let $r=2$. 
The set of all points in the complement $M_{{\mathcal O}_X}\setminus 
M^{\rm st}_{{\mathcal O}_X}$ (see \eqref{ress}) fixed by $\tau$ is finite. 
\end{lem}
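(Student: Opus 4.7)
The plan is to analyze the non-stable locus explicitly. For $r=2$ and trivial determinant, every strictly semistable bundle has Jordan--H\"older filtration with successive quotients $L$ and $L^{-1}$ for some $L\,\in\,\Pic^0(X)$. Hence the polystable representative of each S-equivalence class in $M_{{\mathcal O}_X}\setminus M^{\rm st}_{{\mathcal O}_X}$ is of the form $L\oplus L^{-1}$, and two such bundles $L\oplus L^{-1}$ and $L'\oplus L'^{-1}$ are S-equivalent if and only if $\{L',L'^{-1}\}\,=\,\{L,L^{-1}\}$. This identifies the non-stable locus with the Kummer quotient $\Pic^0(X)/\iota$, where $\iota(L)\,=\,L^{-1}$.

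Next, I would translate the fixed-point condition under $E\,\longmapsto\, E\otimes\tau$ in these coordinates. The class $[L\oplus L^{-1}]$ is fixed by $\tau$ precisely when
\[
\{L\otimes\tau,\,L^{-1}\otimes\tau\}\,=\,\{L,\,L^{-1}\}
\]
as unordered pairs. The alternative $L\otimes\tau\,=\,L$ is ruled out since $\tau$ is nontrivial, so the only possibility is $L\otimes\tau\,=\,L^{-1}$, i.e.\ $L^{\otimes 2}\,=\,\tau^{-1}\,=\,\tau$ (using $\tau^{\otimes 2}\,=\,{\mathcal O}_X$).

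The set $\{L\,\in\,\Pic^0(X)\,\mid\,L^{\otimes 2}\,=\,\tau\}$ is a torsor over the $2$-torsion subgroup $\Pic^0(X)[2]$, so it has exactly $2^{2g}$ elements and is therefore finite. Passing to the quotient by $\iota$ can only make the set smaller, so the fixed-point locus inside $M_{{\mathcal O}_X}\setminus M^{\rm st}_{{\mathcal O}_X}$ is finite, as claimed. (In fact, because $\tau$ is nontrivial, no $L$ satisfying $L^{\otimes 2}\,=\,\tau$ is $\iota$-fixed, so the count is precisely $2^{2g-1}$, though the lemma only asserts finiteness.)

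The only real point that requires care is the identification of the non-stable locus as the Kummer variety $\Pic^0(X)/\iota$ and the observation that S-equivalence of semistable rank $2$ bundles with trivial determinant is completely determined by the unordered pair $\{L,L^{-1}\}$. Once this is in hand, the computation is a direct manipulation of $2$-torsion in $\Pic^0(X)$ and the argument is essentially a one-line finiteness statement.
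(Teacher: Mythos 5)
Your proposal is correct and follows essentially the same route as the paper: both identify a non-stable point with its polystable representative $L\oplus L^{-1}$, use the nontriviality of $\tau$ to force $L\otimes\tau\,\cong\,L^{-1}$, and conclude finiteness from the resulting torsion condition on $L$ (the paper phrases it as $L^{\otimes 2}\,=\,(L^{\otimes 2})^{*}$, you as $L^{\otimes 2}\,=\,\tau$; these are the same constraint). Your version merely adds the explicit Kummer-variety description and the count $2^{2g-1}$, which the paper leaves implicit.
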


\begin{proof}
Take any point $x\,\in\, M_{{\mathcal O}_X}\setminus M^{\rm 
st}_{{\mathcal O}_X}$. Let $E\,= \, L\oplus L^*$, with $L\, 
\in\, \text{Pic}^0(X)$, be the unique 
polystable vector bundle representing 
the point $x\,\in\, M_{{\mathcal O}_X}$. The action of $\tau$
takes the point $x$ to the point represented by the polystable 
vector bundle $(L\otimes 
\tau) \oplus (L^*\otimes\tau)$.

Assume that $\tau\cdot x\,=\,x$. Then 
the two vector bundles $L\oplus L^*$ and $(L\otimes \tau) \oplus 
(L^*\otimes\tau)$ are isomorphic. This implies that 
\begin{equation} \label{equal}
L\otimes\tau \, \cong \, L^*
\end{equation}
(recall that $\tau$ is nontrivial; so $L\,\not=\, L\otimes\tau$). From
\eqref{equal} it follows that $L^{\otimes 2}\,=\, (L^{\otimes 2})^*$.

Consequently, isomorphism classes of all line bundles $L\, \in\,
\text{Pic}^0(X)$ satisfying \eqref{equal}, for a given
$\tau$, is a finite subset. Therefore, there
are only finitely many points of $M_{{\mathcal O}_X}\setminus M^{\rm
st}_{{\mathcal O}_X}$ that are fixed by $\tau$.
\end{proof}

\begin{rmk}\label{g2}
{\rm When genus of $X$ equals $2$, ${\rm dim}({\rm Pic}^0(Y))=3$. It follows from Lemma \ref{BNR}, 
that $(M^{st}_{{\mathcal O}_X})^{\tau}$ is one dimensional and hence Lemma \ref{nss} implies that 
$M_{{\mathcal O}_X}^{\tau}$ is of codimension two in $M_{{\mathcal O}_X} \cong \P^3_{\C}$.
}
\end{rmk}

Let $\sigma\, \in \Gamma$ be another primitve  element such that $\sigma$ and $\tau$ are ${\mathbb Z}/r{\mathbb Z}$ linearly independent. The subgroup of $\Gamma$ generated $\sigma$ and $\tau$ will
be denoted by $A$. So $A$ is isomorphic to
$({\mathbb Z}/r{\mathbb Z})^{\oplus 2}$.

We note that $\Gamma\subset \Pic(X)$ is identified with
$H^1(X,\, {\mathbb Z}/r{\mathbb Z})\subset H^1(X,\G_m)$ under the 
natural inclusion. Let
\begin{equation}\label{rese}
e\, :\, \Gamma\otimes \Gamma\, \longrightarrow\, {\mathbb Z}/r\mathbb Z
\end{equation}
be the pairing given by the cup product
$$
H^1(X,\, {\mathbb Z}/r{\mathbb Z})\otimes H^1(X,\, {\mathbb Z}/r{\mathbb Z})
\, \stackrel{\cup}{\longrightarrow}\,
H^2(X,\, {\mathbb Z}/r{\mathbb Z})\,=\, {\mathbb Z}/
r\mathbb Z\, .
$$
It is known that this $e$ coincides with the Weil pairing
(see \cite[p. 183]{Mu}).

\begin{prop}\label{fixed-points}
Let $\sigma$ and $\tau$ be two primitive elements of $\Gamma$ such that they generate a subgroup $A=({\mathbb Z}/r{\mathbb Z})^{\oplus 2}$.
If the pairing $e(\sigma\,, \,\tau)\, =\, 0$ then there exists  a nonempty closed irreducible $A$--invariant subset of 
$M_{{\mathcal O}_X}^{\rm st}$ which is fixed pointwise by $\tau$.
\end{prop}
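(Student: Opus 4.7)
The plan is to locate an explicit irreducible component of the $\tau$-fixed locus $(M^{\rm st}_{\mathcal{O}_X})^\tau$ which is also stable under $\sigma$. By Lemma~\ref{BNR} we identify $(M^{\rm st}_{\mathcal{O}_X})^\tau \cong \mathcal{U}_{\mathcal{O}_X}/\text{Gal}(\phi)$, a quotient sitting inside $\text{Pic}^0(Y)$. The $\sigma$-action on $\text{Pic}^0(Y)$ is translation by $\phi^*\sigma$; since $\text{Nm}(\phi^*\sigma) = \sigma^r = \mathcal{O}_X$, it preserves every fiber of both $\text{Nm}$ and $\text{N}$, and it commutes with $\text{Gal}(\phi)$, so the action descends to $(M^{\rm st}_{\mathcal{O}_X})^\tau$. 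Because $\tau$ acts trivially on its own fixed locus, the whole subgroup $A$ acts there, and it suffices to produce a single $\sigma$-invariant irreducible component.

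The next step is to describe how $\sigma$ permutes the connected components of $\text{N}^{-1}(\mathcal{O}_X)$. By Lemma~\ref{HP} this set of components is canonically identified with $K^\vee \cong \mathbb{Z}/r\mathbb{Z}$, and since the $\sigma$-action is by translation on the abelian variety $\text{Pic}^0(Y)$, the induced permutation of components factors through a homomorphism $\Gamma \to K^\vee$. The essential claim, which is the technical heart of the proof, is that this homomorphism is precisely $\sigma \mapsto e(\sigma,\tau)$, where $e$ is the Weil pairing. Granted this, $e(\sigma,\tau)=0$ forces $\sigma$ to preserve every component of $\text{N}^{-1}(\mathcal{O}_X)$; in particular it preserves $V_0$, which is also $\text{Gal}(\phi)$-invariant by Lemma~\ref{gal}.

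Since $V_0$ is irreducible and of positive dimension (the Prym-type dimension $(r-1)(g-1) \ge 1$ for $g \ge 2$), while the $\text{Gal}(\phi)$-fixed locus in $V_0$ is a proper closed subset, the open subvariety $V_0 \cap \mathcal{U}_{\mathcal{O}_X}$ is nonempty, irreducible, and $A$-invariant. Its image in the quotient $\mathcal{U}_{\mathcal{O}_X}/\text{Gal}(\phi)$ is an irreducible, $A$-stable, locally closed subset of $(M^{\rm st}_{\mathcal{O}_X})^\tau$, and its closure inside $M^{\rm st}_{\mathcal{O}_X}$ yields the desired nonempty closed irreducible $A$-invariant subset fixed pointwise by $\tau$.

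The step I expect to be the main obstacle is the Weil-pairing identification in the second paragraph: namely, that the $\sigma$-action on $\pi_0(\text{N}^{-1}(\mathcal{O}_X)) \cong K^\vee$ is literally given by $e(\sigma,\tau)$. I would prove this by matching the bijection of Lemma~\ref{HP} --- which comes from the spectral-cover structure of $Y$ and the structure of the norm map --- against the cup-product description of $e$ on $H^1(X,\mu_r)$, most naturally via the Hochschild--Serre spectral sequence for the $\mathbb{Z}/r\mathbb{Z}$-cover $\phi:Y \to X$ with coefficients in $\mu_r$, whose transgression $d_2$ provides the required link between translations in the Prym and the Weil pairing on $X$.
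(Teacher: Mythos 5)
Your reduction is structurally sound, and it funnels into exactly the same intermediate statement on which the paper's proof turns: that when $e(\sigma,\tau)=0$, translation by $\phi^*\sigma$ preserves the connected components of ${\rm N}^{-1}({\mathcal O}_X)$ (equivalently, $\phi^*\sigma$ lies in the identity component $V_0$). The surrounding steps you give are correct: the $\Gamma$-equivariance of the isomorphism in Lemma \ref{BNR}, the computation ${\rm Nm}(\phi^*\sigma)=\sigma^{\otimes r}={\mathcal O}_X$ showing translation preserves the fibres of ${\rm Nm}$ and ${\rm N}$, the ${\rm Gal}(\phi)$-invariance of $V_0$ from Lemma \ref{gal}, and the passage from $V_0\cap{\mathcal U}_{{\mathcal O}_X}$ to its image in the quotient and then to its closure, which stays inside $(M^{\rm st}_{{\mathcal O}_X})^\tau$ since the fixed locus is closed. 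One small point needs more care than you give it: the properness of the ${\rm Gal}(\phi)$-fixed locus inside $V_0$ does not follow from positivity of $\dim V_0=(r-1)(g-1)$ alone (for $r=2$ the fixed locus of $\beta^*$ in ${\rm Pic}^0(Y)$ has dimension $g>g-1$); one should observe instead that each component of that fixed locus is a translate of $\phi^*{\rm Pic}^0(X)$ and therefore meets any fibre of ${\rm N}$ in a finite set.

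The genuine gap is the step you yourself flag as the main obstacle: the identification of the component-shift homomorphism $\Gamma\to\pi_0({\rm N}^{-1}({\mathcal O}_X))\cong K^\vee$ with $\sigma\mapsto e(\sigma,\tau)$. You offer a strategy (Hochschild--Serre transgression for the cover $\phi$) but no proof, and this is the only place in the whole argument where the hypothesis $e(\sigma,\tau)=0$ is used, so as written the proposal proves nothing without it. The claim is in fact true and known --- for $r=2$ it is Mumford's classical description of the two components of $\ker({\rm Nm})$ of a Prym, and for general $r$ it is essentially equivalent to the result the paper actually invokes --- so your route is viable, but it needs either a genuine execution of the cohomological argument or a citation. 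The paper avoids this work by quoting \cite{BP2}, Proposition 4.5: under $e(\sigma,\tau)=0$ there exists a stable $E$ with $E\otimes\sigma=E\otimes\tau=E$; writing $E=\phi_*\xi$ with $\xi\in{\rm N}^{-1}({\mathcal O}_X)$ and using $E\otimes\sigma=E$ to produce $\beta\in{\rm Gal}(\phi)$ with $\xi\otimes\phi^*\sigma=\beta^*\xi$, it gets $\phi^*\sigma=(\beta^*\xi)\otimes\xi^{-1}\in V_0$ directly from Lemma \ref{gal}, and then concludes exactly as you do. In other words, where the paper imports the hard fact as a black box and cheaply deduces the component statement from the existence of a single fixed bundle, you propose to prove the harder, more precise Weil-pairing statement from scratch; that would be a more self-contained proof, but it is the part you have not done.
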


\begin{proof}
By \cite{BP2}, Proposition 4.5, it follows that under the condition $e(\sigma, \tau)=0$ there is a stable bundle $E$ of rank $r$ and determinant ${\mathcal O}_X$ such that 
$E\otimes \sigma=E\otimes \tau=E$. 

The condition $E\otimes \tau=E$ implies the existence of a line bundle $\xi \in {\rm N}^{-1}({\mathcal O}_X)$ such that $ \phi _*(\xi)=E$.
The condition $E\otimes \sigma =E$ implies that there is a $\beta \in \text{Gal}(\phi)$ such that $\xi \otimes \phi ^*\sigma= \beta ^*\xi$.
Hence $\phi^*\sigma = (\beta^*\xi) \otimes \xi^{-1}$ lies in $V_0$, because $\beta^*\xi$ and $\xi^{-1}$ lie in the
same component (see both parts of the  Lemma \ref{gal}).

One observes that ${\rm N}^{-1}({\mathcal L}_X)$ is $\Gamma$ equivariantly isomorphic to the translate $L \cdot {\rm Nm}^{-1}({\mathcal O}_X)$ by any line bundle $L$ such that $N(L)={\mathcal L}_X$. This along with the above fact that  $\phi^*\sigma \in V_0$ implies that the translation by  $\phi^*\sigma$ preserves the connected 
components of ${\rm N}^{-1}({\mathcal L}_X)$.

Hence we conclude that  any connected component of the
quotient ${\mathcal U}_{{\mathcal L}_X}/{\rm Gal}(\phi)\, \subset\, (M^{\rm st}_{{\mathcal O}_X})^\tau$ is a
closed irreducible $A$-invariant subscheme of $M_{{\mathcal O}_X}^{\rm st}$
which is fixed pointwise by $\tau$.
This completes the proof of the proposition.
\end{proof}

\section{Brauer group of a desingularization of $N$}

In this section we identify the second cohomology $H^2(\Gamma,\,{\mathbb C}^*)$ with the space of alternating 
bi-multiplicative maps from $\Gamma$ to ${\mathbb C}^*$ (see
\cite[p. 215, Proposition 4.3]{Ra}); the group $H^2(\Gamma,\, 
{\mathbb C}^*)$ is isomorphic to the dual of the second exterior 
power of $(\Z/2\Z)^{2g}$. 

Recall that under the identification of $\Gamma$ with $H^1(X, \Z/r\Z)$, the Weil pairing
coincides with the intersection pairing.  Let $\{a_1,b_1,\cdots a_g,b_g\}$ be a symplectic basis for $H^1(X, \Z/r\Z)$.
in other words we have $e(a_i,a_j)=0=e(b_i,b_j)$ for all $i$ and $j$,  and $e(a_i,b_j)=\delta _{i,j}$.

Let $G\, \subset\, H^2(\Gamma,\,{\mathbb C}^*)$ be defined by 
$$
G\,:= \, \{ b\, \in\, H^2(\Gamma,\,{\mathbb C}^*)\, \mid \,
e(\sigma_1\, ,\,\sigma_2)\, =\, 0\, \Rightarrow\,
b(\sigma_1\, ,\,\sigma_2)\,= \,0\}\, .
$$
Let $H$ be the subgroup of $G$ of order two generated by the Weil pairing $e$.

\begin{lem}\label{mistake}
The group $G$ coincides with the subgroup $H$.
\end{lem}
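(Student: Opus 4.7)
The plan is to prove the nontrivial inclusion $G\subseteq H$; the reverse inclusion $H\subseteq G$ is immediate, since the Weil pairing $e$ (and hence every integer power of it) manifestly satisfies the defining implication of $G$. I work throughout with the symplectic basis $\{a_1,b_1,\ldots,a_g,b_g\}$ of $\Gamma\cong H^1(X,\,{\mathbb Z}/r{\mathbb Z})$ fixed in the preceding paragraph, and I fix an arbitrary $b\in G$.

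First I would reduce $b$ to the data of its $g$ diagonal values $c_i:=b(a_i,b_i)$. By the alternating property, $b(a_i,a_i)=b(b_i,b_i)=0$. By the defining hypothesis of $G$, the vanishing of $e$ on the pairs $(a_i,a_j)$ and $(b_i,b_j)$ for all $i,j$, and on $(a_i,b_j)$ for $i\neq j$, forces the corresponding values of $b$ to vanish as well. Bi-multiplicativity then shows that $b$ is completely determined on $\Gamma\otimes\Gamma$ by the tuple $(c_1,\ldots,c_g)$.

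Second, I would show that the $c_i$ are all equal. The key trick is to test $b$ on a pair on which $e$ vanishes but which nevertheless probes the difference $c_i-c_j$. For $i\neq j$, writing $e$ additively with values in ${\mathbb Z}/r{\mathbb Z}$,
\[
e(a_i-a_j,\,b_i+b_j)\,=\,e(a_i,b_i)+e(a_i,b_j)-e(a_j,b_i)-e(a_j,b_j)\,=\,1+0-0-1\,=\,0.
\]
The hypothesis on $b$ therefore gives $b(a_i-a_j,\,b_i+b_j)=0$; expanding this by bi-multiplicativity and using the vanishings established in the first step collapses the left hand side to $c_i-c_j$. Hence all $c_i$ coincide with a common $c$, and $b$ agrees with $e^c$ on every pair of basis elements, so $b=e^c\in H$.

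The only nontrivial step is spotting the test pair $(a_i-a_j,\,b_i+b_j)$; once this is in hand, everything else is bookkeeping in the symplectic basis. The argument is characteristic-independent and works for general $r$, specializing to the case of the order-two subgroup when $r=2$.
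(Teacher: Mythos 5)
Your proof is correct, and it follows the same basic strategy as the paper's own argument: fix the symplectic basis, use the defining property of $G$ to kill the off--diagonal values $b(a_i,a_j)$, $b(b_i,b_j)$, $b(a_i,b_j)$ for $i\neq j$, and then evaluate $b$ on an $e$--isotropic pair mixing two indices to force the diagonal values $c_i=b(a_i,b_i)$ to coincide. The difference lies in the choice of test pair, and it is not cosmetic. The paper tests on $(a_i+b_j,\,a_j-b_i)$ and asserts $e(a_i+b_j,a_j-b_i)=e(a_j,b_j)-e(a_i,b_i)=0$; but skew--symmetry gives $e(b_j,a_j)=-e(a_j,b_j)$, so the correct value is $-e(a_i,b_i)-e(a_j,b_j)=-2$, which vanishes in $\Z/r\Z$ only when $r$ divides $2$. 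Hence the paper's computation is valid only for $r=2$ (consistent with its description of $H$ as a subgroup ``of order two''), even though the lemma is invoked in the proof of the main theorem for arbitrary $r$, where $H=\langle e\rangle$ should have order $r$. Your pair $(a_i-a_j,\,b_i+b_j)$ is genuinely isotropic for every $r$, since $e(a_i-a_j,b_i+b_j)=e(a_i,b_i)-e(a_j,b_j)=1-1=0$, and your expansion then yields $c_i-c_j=0$ exactly as needed. So your argument not only reproduces the paper's proof but repairs it, establishing $G=\langle e\rangle$ in the generality in which the lemma is actually used; you also make explicit the reduction of $b$ to its diagonal values, which the paper compresses into the phrase ``this implies that $f$ is a multiple of $e$''.
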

\begin{proof}
Fix an $i$ and $j$ such that $i\neq j$ then one checks that $e(a_i+b_j,a_j-b_i)=e(a_j,b_j)-e(a_i,b_i)=0$ hence if $f \in G$  
then $0=f(a_i+b_j,a_j-b_i)=f(a_j,b_j)-f(a_i,b_i)$. This implies that $f$ is a multiple of $e$. 
\end{proof}

Our main theorem is the following.
\begin{thm}\label{theorem}
Let ${\widehat N}$ be a desingularization of the moduli space $N$. Then the Brauer group
${\rm Br}({\widehat N})=0$ 
\end{thm}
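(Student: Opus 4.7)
The plan is to reduce the computation of $\mathrm{Br}(\widehat N)$ to a purely group-theoretic question about $H^2(\Gamma,\C^*)$ by using the Galois cover $f\colon M\to N=M/\Gamma$, and then to pin it down using Proposition \ref{fixed-points} and Lemma \ref{mistake}.

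First I would invoke the companion paper \cite{BHH} to assume that a $\Gamma$-equivariant desingularization $\widehat M\to M=M_X(r,\mathcal L_X)$ satisfies $\mathrm{Br}(\widehat M)=0$. Via the Hochschild-Serre spectral sequence for the quotient $\widehat M\to \widehat M/\Gamma$, combined with Hilbert 90 and the vanishing of $\mathrm{Br}(\widehat M)$, the unramified Brauer group $\mathrm{Br}(\widehat N)$ appears as a subquotient of $H^2(\Gamma,\C^*)=\Lambda^2\Gamma^{\vee}$ modulo the image of the natural obstruction map coming from $\Pic(\widehat M)^{\Gamma}$. Since $\mathrm{Br}$ is a birational invariant of smooth projective varieties, it suffices to identify which classes in $H^2(\Gamma,\C^*)$ survive to give unramified classes on a resolution.

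Second I would impose the local ramification condition: for $\alpha\in H^2(\Gamma,\C^*)$ to yield an unramified class on $\widehat N$, the restriction $\alpha|_A$ must vanish for every bicyclic subgroup $A\subset\Gamma$ that occurs as the stabilizer of a point of $\widehat M$, via a Bogomolov-style local analysis at the associated quotient singularities. Proposition \ref{fixed-points} supplies exactly the required input: for any isotropic pair $(\sigma,\tau)$ with $A=\langle\sigma,\tau\rangle\cong(\Z/r\Z)^{\oplus 2}$ and $e(\sigma,\tau)=0$, there is a non-empty $A$-invariant closed subset of $M^{\rm st}$ fixed pointwise by $\tau$ (and in fact by all of $A$, using the result of \cite{BP2} invoked in its proof). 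Hence every such $A$ is genuinely a stabilizer, and so $\alpha(\sigma,\tau)=0$ whenever $e(\sigma,\tau)=0$; that is, $\alpha\in G$. Lemma \ref{mistake} then gives $G=\langle e\rangle$, so $\alpha$ must be a multiple of the Weil pairing.

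Finally I would kill the Weil pairing class by invoking the determinant (theta) line bundle $\Theta$ on $M$: the obstruction to $\Gamma$-linearizing $\Theta$ is exactly $e\in H^2(\Gamma,\C^*)$ (a classical fact, cf.\ \cite{BP2}, \cite{NR}), so $e$ already lies in the image of the $\Pic(\widehat M)^{\Gamma}$-obstruction in the spectral sequence and is annihilated in $\mathrm{Br}(\widehat N)$. The main obstacle I anticipate is the second step: justifying the Bogomolov-type ramification criterion precisely in our non-linear setting, i.e.\ verifying that only bicyclic stabilizers at actual points of $\widehat M$ contribute and that the fixed subvariety produced by Proposition \ref{fixed-points} really does force $\alpha|_A=0$ after $\Gamma$-equivariant resolution. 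The remaining steps are either formal consequences of the preparatory material of Sections \ref{pril}-\ref{pril} and Section 3-4, or standard facts about the $\Gamma$-action on the theta bundle.
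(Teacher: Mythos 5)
Your proposal is correct in outline and takes essentially the same route as the paper's proof: the Serre/Hochschild--Serre spectral sequence for the $\Gamma$-cover $f$, Bogomolov's theorem combined with Proposition \ref{fixed-points} and Lemma \ref{mistake} to confine any unramified class coming from $H^2(\Gamma,\,{\mathbb C}^*)$ to the subgroup generated by the Weil pairing $e$, the identification of $e$ with the obstruction class arising from $\Pic(M^{\rm rst})^\Gamma$ (so that $e$ dies under $\rho$), and the vanishing of the Brauer group of a $\Gamma$-equivariant desingularization of $M$ to kill what remains. The only points where the paper is more careful than your sketch are exactly the ones you flag: the spectral sequence is run over the regularly stable locus $N^{\rm rst}$, where the $\Gamma$-action is genuinely free and the complement has codimension at least two (not over $\widehat M/\Gamma$, where the action has fixed points), and the case $g=2$, $r=2$ is handled separately by replacing $M^{\rm rst}$ with the free locus $M^{\rm free}$; also, Proposition \ref{fixed-points} only gives a subvariety fixed pointwise by $\tau$ and invariant under $A$, not fixed by all of $A$, but that weaker statement is all Bogomolov's criterion requires.
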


\begin{proof} We first assume that either $g\,\geq \,3$ or when $g=2$ rank $r>2$. The case of $g\,=\,2$ and rank $r=2$ 
will be treated separately.

It is enough to prove the theorem for some desingularization $\widehat{N}$ of 
$N$ because the Brauer group is a birational invariant for the 
smooth projective varieties. We choose a $\Gamma$--equivariant desingularization
\begin{equation}\label{p}
p\,:\, {\widetilde M}_{{\mathcal O}_X}\,\longrightarrow 
\,M_{{\mathcal O}_X}
\end{equation}
which is an isomorphism over $M^{\rm st}$; so ${\widetilde 
M}_{{\mathcal L}_X}$ is equipped with an action of $\Gamma$ 
given by the action of $\Gamma$ on $M_{{\mathcal L}_X}$. Define
$$
{\widetilde N}\, :=\, {\widetilde M}_{{\mathcal L}_X}/\Gamma\, .
$$

Let
$$
{\widehat N}\, \longrightarrow\, {\widetilde N}
$$
be a desingularization of ${\widetilde N}$ which is an isomorphism over
the smooth locus. So ${\widehat 
N}$ is also a desingularization of $N$. 

A stable principal $\text{PGL}_r({\mathbb C})$--bundle $E$ on $X$ 
is called \textit{regularly stable} if
$$
\text{Aut}(E)\,=\, e
$$
(by $\text{Aut}(E)$ we denote the automorphisms of the 
principal bundle $E$ over the identity map of $X$).
It is known that the locus of regularly stable bundles in $N$, 
which we will denote by $N^{\rm rst}$, coincides with the smooth 
locus of $N$ \cite[Corollary 3.4]{BHof}. Define
$$
M^{\rm rst}\, :=\, f^{-1}(N^{\rm rst})\, ,
$$
where $f$ is the morphism in \eqref{4f}. We note that the action of
$\Gamma$ on $M_{{\mathcal L}_X}$ preserves $M^{\rm rst}$, because 
$f$ is an invariant for the action of $\Gamma$. The 
action of $\Gamma$ on $M^{\rm rst}$ can be shown to be free. 
Indeed, if $E\, =\, E\otimes\tau$, where $\tau$ is nontrivial,
any isomorphism of $E$ with $E\otimes\tau$ produces a nontrivial
automorphism of ${\mathbb P}(E)$, because ${\mathbb 
P}(E\otimes\tau)\,=\, {\mathbb P}(E)$. Hence such a vector bundle $E$ 
cannot lie in $M^{\rm rst}$.

Consequently, the projection $f$ in \eqref{4f} defines a 
principal $\Gamma$--bundle
\begin{equation}\label{gfb}
M^{\rm rst}\, \stackrel{f}{\longrightarrow}\, N^{\rm rst}\, .
\end{equation}
Since $N$ is normal, and $N^{\rm rst}$ is its smooth locus, it 
follows that the codimension of the complement $N\setminus
N^{\rm rst}$ is at least two. Therefore, the codimension of the 
complement of $M^{\rm rst} \,\subset \, M_{{\mathcal O}_X}$ is at 
least two. Hence 
$$ H^0(M^{\rm rst},{\mathbb G}_m)={\mathbb C}^* $$
The Serre spectral sequence for the above principal 
$\Gamma$--bundle gives an exact sequence
$$
\text{Pic}(N^{\rm rst}) \,\stackrel{\delta}{\longrightarrow}\, 
\text{Pic}(M^{\rm 
rst})^\Gamma\, \longrightarrow\, H^2(\Gamma,\, {\mathbb C}^*)\, .
$$
We have $\text{Pic}(M^{\rm rst})^\Gamma/{\rm image}(\delta)
\,=\, \Z/l\Z$ \cite{BH} (see (3.5) in \cite{BH} and 
lines following it) where $l=(r,d)$. Hence we get an inclusion
\begin{equation}\label{res-go2}
\Z/l\Z\, \hookrightarrow\, H^2(\Gamma,\, {\mathbb C}^*)\, ,
\end{equation}
where the generator of $\Z/l\Z$ maps to the Weil pairing $e$
(see the proof of Proposition 9.1 in \cite[p. 203]{BLS}).

For the chosen desingularization $\widehat{N} \to N$, we have
\begin{equation}\label{resem}
{\rm Br}({\widehat N}) \,\subset\, {\rm Br}(N^{\rm rst})\,=\,
{\rm Br}(M^{\rm rst}/\Gamma)
\end{equation}
using the inclusion of $N^{\rm rst}$ in ${\widehat N}$. The 
Brauer group ${\rm Br}(N^{\rm rst})$ is computed in \cite{BH}.

The Serre spectral sequence for the principal 
$\Gamma$--bundle in \eqref{gfb} gives the following exact 
sequence:
\begin{equation}\label{rho}
H^2(\Gamma,\, {\mathbb C}^*)\,\stackrel{\rho}{\longrightarrow} 
\,H^2(M^{\rm 
rst}/\Gamma,\,\G_m)\,\longrightarrow\,H^2(M^{\rm rst},\,\G_m)\, .
\end{equation}
Let $\mathbb S$ be the set of all bicyclic subgroups $A\,
\subset\, \Gamma$  of the form $(\Z/r\Z)^{\oplus 2}$ satisfying the condition that there is some closed
irreducible subvariety $\mathcal Z$ of ${\widetilde M}_{{\mathcal O}_X}$ preserved
be the action of $A$ such that a primitive element of $A$ fixes $\mathcal Z$. 

Define the subgroup
$$G'\, :=\, \bigcap_{A\in{\mathbb S}}{\rm kernel}(H^2(\Gamma,\,
{\mathbb C}^*) \,\rightarrow\,H^2(A,\,{\mathbb C}^*))\, \subset
\, H^2(\Gamma,\,{\mathbb C}^*)\, .
$$ 
Using a theorem of Bogomolov, \cite[p. 288, Theorem 1.3]{Bo}, we have
\begin{equation}\label{i}
\rho^{-1}(H^2({\widehat N},\,\G_m))\,\subset\,G'
\end{equation}
(see \eqref{resem}).

We will show that $G'$ is a subgroup of  $G$ in $H^2(\Gamma,\,
{\mathbb C}^*)$. This will prove that the image $\rho( \rho^{-1}(H^2({\widehat N},\,\G_m)))=0$

$b\, \in\, G'$. We need to check that
$b(\sigma\, ,\,\tau)\, =\, 0$ whenever $e(\sigma\, ,\,\tau)\, =\, 0$ for a pair of primitive elements generating the subgroup $A=\Z/r\Z^{\oplus 2}$.
Since $e(\sigma\, ,\,\tau)\, =\, 0$, by Proposition \ref{fixed-points},
there is an irreducible closed subscheme $Z\, \subset\, M_{{\mathcal O}_X}^{\rm st}$
which is $A$--invariant and fixed pointwise by $\tau$. 

Since the $\Gamma$--equivariant desingularization $p$ in \eqref{p} is an isomorphism
over $M_{{\mathcal O}_X}^{\rm st}$, we conclude that the closure of $Z$ in
${\widetilde M}_{{\mathcal O}_X}$ is an $A$-invariant closed irreducible subscheme which is
fixed pointwise by $\tau$. Hence the action of $A$ on this closure is cyclic.
This implies that $A\, \in\, {\mathbb S}$, and hence $b(\sigma\, ,\,\tau)\, =\, 0$.
Therefore $G'\, \subset\, G$. 
 
Consequently, we have shown that $ H^2({\widehat N},\,\G_m) \cap {\rm image}(\rho)=0$.

This proves that the composition
\begin{equation}\label{c1}
H^2({\widehat N},\, \G_m) \,\longrightarrow\, H^2(M^{\rm 
rst}/\Gamma,
\, \G_m)\,\longrightarrow \,H^2(M^{\rm rst},\, \G_m)
\end{equation}
is injective. We will prove that this composition 
is zero (these homomorphisms are induced by the inclusion
$M^{\rm rst}/\Gamma\, \hookrightarrow\, \widehat N$ and the
quotient map to $M^{\rm rst}/\Gamma$).

Consider the diagram 
$$ \xymatrix{ 
{\widehat M} \ar[r]\ar[d] & 
{\widetilde M}_{{\mathcal L}_X} \ar[d] \\
{\widehat N}\ar[r] & {\widetilde N}}
$$
where ${\widehat M}$ is a $\Gamma$--equivariant desingularization 
of the closure of
$$
M^{\rm rst}\,=\, {\widehat N}\times _{N^{\rm rst}} M^{\rm rst}$$ 
in the fiber product ${\widehat N}\times_{{\widetilde 
N}}{\widetilde M}_{{\mathcal L}_X}$. This gives an action of 
$\Gamma$ on the smooth projective variety 
${\widehat M}$ which has a $\Gamma$--invariant open
subscheme $M^{\rm rst}$
with the quotient $M^{\rm rst}/\Gamma$ 
being the Zariski open subset $N^{\rm rst}$ of ${\widehat N}$.
Using the commutativity of $\Gamma$--actions we obtain a commutative 
diagram of homomorphisms
\begin{equation}\label{dg}
\xymatrix{ 
H^2({\widehat N}, \G_m) \ar[r]\ar[d] & 
H^2(N^{\rm rst}, \G_m) \ar[d] \\
H^2({\widehat M}, \G_m)\ar[r] & H^2(M^{\rm rst}, \G_m)}
\end{equation}
Since ${\widehat M}$ is also a desingularization of $M$, we conclude
by \cite[p. 309, Theorem 1]{Ni} (see also \cite[Theorem 1]{BHH}) that
$$
H^2({\widehat M}, \,\G_m)\,=\, 0\, .
$$
Hence from \eqref{dg} it follows that the image of $H^2({\widehat 
N}, \,\G_m)$ in $H^2(M^{\rm rst},\, \G_m)$ by the composition
in \eqref{c1} is zero. This completes the proof when $g\,\geq\, 3$.

Now assume that $g\,=\,2$ and rank $r=2$. So $M_{{\mathcal L}_X}$ is
already smooth. We take 
${\widetilde M}_{{\mathcal L}_X}\,=\, M_{{\mathcal L}_X}$. Let $M^{\rm free}
\,\subset\, M_{{\mathcal L}_X}$ be the largest Zariski
open subset where the action of $\Gamma$ is free. It follows from Remark
\ref{g2} that the complement of $M^{\rm free}$ is of codimension two. The
entire argument above works in this case after replacing $M^{\rm rst}$ by 
$M^{\rm free}$ and $N^{\rm rst}$ by $M^{\rm free}/\Gamma$.
\end{proof}

\end{document}